\documentclass{article}
\usepackage{amsmath,amssymb,amsthm}
\usepackage{graphics}

\newtheorem{theorem}{Theorem}[section]
\newtheorem{lemma}{Lemma}[section]
\newtheorem{remark}{Remark}[section]

\newcommand{\ar}{\mathop{\rm AR}\nolimits}

\newcommand{\dg}{\mathop{GT}\nolimits}
\newcommand{\bigo}{\mathop{\rm O}\nolimits}

\newcommand{\so}{\mathop{\rm SO}\nolimits}
\newtheorem{cor}{Corollary}[section]
\newtheorem{prop}{Proposition}[section]
\newtheorem{defn}{Definition}[section]

\title {On rigid Hirzebruch genera}


\author {Oleg R. Musin
\thanks{Research supported in part by NSF
grant DMS-0807640 and NSA grant MSPF-08G-201.}}

\begin{document}
\date{}
\maketitle

\begin{abstract} The classical multiplicative (Hirzebruch) genera of manifolds have the wonderful property which is called rigidity. Rigidity of a genus $h$ means that if a compact connected Lie group $G$ acts on a manifold $X$, then the equivariant genus $h^G(X)$ is independent on $G$, i.e. $h^G(X)=h(X)$.

In this paper we are considering the rigidity problem for stably complex manifolds. In particular, we are proving that a genus is rigid if and only if it is a generalized Todd genus.

\medskip

\noindent 2000 MSC: 55N22, 57R77.

\medskip

\noindent KEY WORDS. Hirzebruch genus, rigid genus, complex bordism.

\end {abstract}

\section{Introduction}


Let $U_*$ be the complex bordism ring with coefficients in $R={\Bbb Q}$, ${\Bbb R}$, or ${\Bbb C}$. For a closed smooth stably complex manifold $X$, Hirzebruch \cite{Hir1} defined a multiplicative genus $h(X)$ by a homomorphism $h:U_*\otimes R\to R$.

Recall that according to Milnor and Novikov, two stably complex  manifolds are complex cobordant if and only if they have the same Chern numbers. Therefore, for any  multiplicative genus $h$ there exists a multiplicative sequence of polynomials $\{K_i(c_1,\ldots,c_i)\}$  such that
$
h(X)=K_n(c_1,\ldots,c_n),
$
where the $c_k$ are the Chern classes of $X$ and $n=\dim_{\Bbb C}(X)$ (see \cite{Hir1,Hir2,BN} and Section 2).

Let $U_*^G$ be the ring of complex bordisms of manifolds with actions of a compact Lie group $G$. Then for any homomorphism $h:U_*\otimes R\to R$ we can define the {\it equivariant genus} $h^G$, i.e. a homomorphism
$$
h^G:U_*^G\otimes R\to K(BG)\otimes R
$$
(see details in \cite{krich1}).

A multiplicative genus $h$ is called {\it rigid} if for any connected compact group $G$ the equivariant genus $h^G(X)=h(X)$. For the complex case rigidity means that
$$
h^G:U_*^G\otimes R\to R\subset K(BG)\otimes R,
$$
i.e. $h^G([X,G])$ belongs to the ring of constants. It is well known (see \cite{AH, krich1} or \cite[Sec. 3]{BPR}) that ${\bf S}^1$-rigidity implies $G$-rigidity, i.e. it is sufficient to prove rigidity only for the case $G={\bf S}^1$.

For $G={\bf S}^1$, the universal classifying space $BG$ is ${\Bbb C}${P}$^\infty$, and the ring $K(BG)\otimes R$ is isomorphic to the ring of formal power series  $R[[t]]$. Then for any ${\bf S}^1-$manifold $X$ and a Hirzebruch genus $h$ we have 
$h^{S^1}([X,{\bf S}^1])$ in $R[[t]]$. For instance, in the case of an ${\bf S}^1-$action on an almost complex manifold $X^n$ with isolated fixed points $p_1\ldots,p_m$ with weights $w_{i1},\ldots,w_{in}$, $i=1,\ldots,m$,  $\; h^{S^1}$ can be found explicitly (see \cite{nov,misch,kasp,krich1}):
$$
h^{S^1}([X,{\bf S}^1])=S_h(\{w_{ij}\},t):=\sum\limits_{i=1}^m\prod_{j} {\frac{H(w_{ij}\,t)}{w_{ij}\,t}}, \eqno (1.1) $$
where $H$ is the characteristic series of $h$ (see Section 2). If $h$ is rigid, then from $(1.1)$ it follows that
$$
h(X)=S_h(\{w_{ij}\},t) \; \mbox{ for any } \; t. \eqno (1.2)
$$

Atiyah and Hirzebruch based on the Atiyah-Singer index theorem  proved that  $T_y-$genus is rigid \cite{AH}. (Note that $T_0$ is the famous Todd genus.) Krichever \cite{krich1} gives a proof of rigidity of the $T_{x,y}-$genus using global analytic properties of $S_h(\{w_{ij}\},t)$. It is not hard to see that $(1.2)$ yields the
Atiyah-Hirzebruch formula \cite{AH,krich1}  for an almost complex ${\bf S}^1$-manifold $X$:
$$
T_{x,y}(X)=\sum\limits_{i=1}^m {x^{s_i^+}(-y)^{s_i^-}},
$$
where $s_i^+, s_i^-$ are numbers of positive and negative weights $\{w_{ij}\}$.

Recently, Buchstaber and Ray \cite{BR} (see more details in \cite{BPR}) discovered that a correct formula  for stably complex  ${\bf S}^1$-manifolds is the following: 
$$
T_{x,y}(X)=\sum\limits_{i=1}^m {\varepsilon_i x^{s_i^+}(-y)^{s_i^-}},
$$
where $\varepsilon_i=\pm1$ is the ``sign'' of $p_i$. So Krichever's original formula holds only for almost complex manifolds, where all $\varepsilon_i=1$. Note that his $T_{x,y}-$genus rigidity theorem for  stably complex manifolds is proved in \cite{BPR}. 

Let $h$ be a Hirzebruch genus for oriented manifolds, i.e. $h:\Omega^{\so}_*\otimes R\to R$  is a homomorphism of rings. Since the class of oriented manifolds is greater than the class of unitary manifolds, the family of rigid genera for oriented manifolds  is less than the family of complex rigid genera. Note that $T_{1,1}$ coincides with the famous $L$-genus (or signature) \cite{Hir1}. Atiyah and Hirzebruch proved that the $L$-genus is rigid \cite{AH}. They also proved that the $\hat A-$genus vanishes (i.e. is rigid) for spin manifolds.

Krichever \cite{krich76} extended the Atiyah-Hirzebruch theorem on the $\hat A-$genus to almost complex manifolds. Namely he proved that if for an integer $k>1$ we have $c_1(X)\equiv 0$ (mod\,$k$), then $A_k(X)=0$.

The theory of elliptic genera and elliptic cohomologies which arose in the end of 1980s in the papers of Witten, Ochanine, Landweber, and Stong \cite{witt1, witt2,och, land} was stimulated by Witten's conjecture concerning the rigidity of the  character-index of ``twisted" Dirac operators, or equivalently the rigidity of the equivariant elliptic genera \cite{krich2,land,liu}. Bott, Taubes, Hirzebruch, and Krichever \cite{BT, taubes, Hir3, krich2} have studied Witten's conjecture and its extensions and proved several rigidity theorems (see Liu \cite{liu} for references, historical overview, and  rigidity theorems).  Actually, Krichever \cite{krich2} generalized elliptic genera studied in \cite{BT,Hir3,land,och,taubes,witt1,witt2} and proved that they are rigid for SU-manifolds, i.e. for almost complex manifolds whose first Chern class iz zero.

Note that among genera that were considered above only the family of $T_{x,y}$ genera is rigid for all complex manifolds. For all other rigid cases we need additional assumptions, for instance $c_1(X)=0$ for Krichever's genera. In this paper we show that a multiplicative genus is rigid for all complex manifolds  if and only if it is a generalized Todd genus (Sections 3 and 4).

\section{Hirzebruch genera and algebraic rigidity}

\subsection {The genus of a formal power series}
A sequence of polynomials $K_1(c_1), K_2(c_1,c_2),...$ in variables $c_1, c_2,...$ with coefficients in $R$, where $R={\Bbb Q}$, ${\Bbb R}$, or ${\Bbb C}$, is called {\it multiplicative} if
$$
1 + c_1t + c_2t^2 + c_3t^3 + ... = (1 + a_1t + a_2t^2 + ...)\,(1 + b_1t + b_2t^2 + ...)
$$
implies that
$$
\sum\limits_{j=0}^\infty {K_j(c_1,\ldots,c_j)t^j} = \sum\limits_{i=0}^\infty {K_i(a_1,\ldots,a_i)t^i}\,\sum\limits_{\ell=0}^\infty {K_\ell(b_1,\ldots,b_\ell)t^\ell}.
$$
Let $$H(z)=1+r_1t+r_2t^2+r_3t^3+\ldots$$ be a formal power series in $R[[t]]$ with $r_i=K_i(1,0,\ldots,0)$. The power series $H(t)$ is called the {\it characteristic  (or Hirzebruch's)  power series} of the multiplicative sequence $\{K_j\}$.

Note that a multiplicative sequence $\{K_j\}$ is uniquely defined  by its  characteristic power series $H(t)$ \cite[Lemma 1.2.1]{Hir1}. Moreover, for any formal power series $H(t)=1+r_1t+r_2t^2+\ldots$ there exists a multiplicative sequence $\{K_j\}$ with  Hirzebruch's  power series $H(t)$ \cite[Lemma 1.2.2]{Hir1}.

The genus $h$ of complex manifolds corresponding to $H$ is given by
$$
h(X)=K_n(c_1,\ldots,c_n), \; \; n=\dim_{\Bbb C}(X),
$$
where the $c_k$ are the Chern classes of $X$. Therefore, $H$  defines the homomorphism $h:U_*\otimes R\to R$
 from the complex cobordism ring $U_*\otimes R$ into $R$ (see details in \cite{BN,Hir1,Hir2, nov}).

 Novikov \cite{nov} proved that $t/H(t)$  is equal to $g_h^{-1}(t)$ in $R[[t]]$, where
 $$g_h(t)=\sum\limits_{n=0}^\infty {\frac{h_n}{n+1}\,t^{n+1}}$$
 and
 $$h_n:=[H(t)^{n+1}]_n=h({\Bbb C}\,{\mbox{P}}^n).$$

 \subsection{Linear circle actions on  ${\Bbb C}${P}$^n$} 
 Consider the following action of ${\bf S}^1$ on ${\Bbb C}${P}$^n$:
 $$
 [z_0:z_1:\ldots:z_n] \to  [e^{iw_0\varphi}z_0:e^{iw_1\varphi}z_1:\ldots:e^{iw_n\varphi}z_n]. \eqno (2.1)
 $$
 Denote  ${\Bbb C}${P}$^n$ with the circle action $(2.1)$ by ${\Bbb C}${P}$^n[w_0,\ldots,w_n]$.

  If $w_0,\ldots,w_n$ are distinct integers, then this action has $n+1$ isolated fixed points $p_0,\ldots,p_n$ and weights of the representation of ${\bf S}^1$ in the tangent plane to $p_i$ are $w_{0,i},\ldots,w_{i-1,i},w_{i+1,i},\ldots,w_{n,i}$, where $w_{k,i}=w_k-w_i$.

  Let $H=1+r_1t+r_2t^2+\ldots\; $ be a formal power series. Denote
 $$F_H(t):=\frac{H(t)}{t}=\frac{1}{t}+r_1+r_2t+r_3t^2+\ldots$$
 and
 $$
 S_H(w_0,\ldots,w_n;t):=\sum\limits_{i=0}^n\prod_{j\ne i} {F_H([w_j-w_i]t)}.
 $$

 \begin{prop} Let $H(t)=1+r_1t+\ldots\in R[[t]]$ and let $w_0,\ldots,w_{m}$ be distinct integers. Then $S_H(w_0,\ldots,w_n;t)  \in R[[t]]$ and $[S_H(w_0,\ldots,w_n;t)]_0=h_n$, i.e.
 $$
 S_H(w_0,\ldots,w_n;t)=h_n+s_1t+s_2t^2+\ldots .
 $$
 \end{prop}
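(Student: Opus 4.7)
My plan is to multiply through by $t^n$ to clear the poles at $t=0$, then recognize the resulting sum as the output of Lagrange interpolation applied coefficient-wise in $t$. Since $F_H(ct) = H(ct)/(ct)$, multiplying $S_H$ by $t^n$ turns each summand into $\prod_{j\ne i}H((w_j-w_i)t)/\prod_{j\ne i}(w_j-w_i)$. Because $H(0)=1$, one may harmlessly insert the $j=i$ factor and write
\[
t^n S_H(w_0,\ldots,w_n;t) = \sum_{i=0}^n \frac{F(w_i)}{\prod_{j\ne i}(w_j-w_i)}, \qquad F(z):=\prod_{j=0}^n H\bigl((w_j-z)t\bigr).
\]
The right-hand side manifestly lies in $R[[t]]$, so both assertions of the proposition reduce to computing its first $n+1$ coefficients in $t$: one must show that the coefficient of $t^k$ in $t^n S_H$ is $0$ for $k<n$ and equals $h_n$ for $k=n$.

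Next I inspect $F(z)$ term-by-term in $t$. Expanding $H(u)=\sum_\ell r_\ell u^\ell$, the coefficient of $t^k$ in $F(z)$ is a polynomial in $z$ of degree exactly $k$; collecting leading terms (using $(w_j-z)^{\ell_j}=(-z)^{\ell_j}+\cdots$ together with $\sum_{\sum\ell_j=k}\prod_j r_{\ell_j} = [H(t)^{n+1}]_k$) shows its leading $z$-coefficient to be $(-1)^k [H(t)^{n+1}]_k$. In particular, for $k=n$ this leading coefficient is $(-1)^n h_n$.

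The final ingredient is the classical Lagrange interpolation identity: for any polynomial $P(z)$ of degree at most $n$ at distinct nodes $w_0,\ldots,w_n$, the sum $\sum_{i=0}^n P(w_i)/\prod_{j\ne i}(w_i-w_j)$ equals the coefficient of $z^n$ in $P$. Applied coefficient-by-coefficient in $t$ to the displayed rewrite (and absorbing the sign $\prod_{j\ne i}(w_j-w_i) = (-1)^n\prod_{j\ne i}(w_i-w_j)$), this yields coefficient $0$ for $k<n$ (since the relevant polynomial in $z$ has degree $k<n$) and $(-1)^n\cdot(-1)^n h_n = h_n$ for $k=n$, proving both claims simultaneously. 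I anticipate no real obstacle; the one subtle point is that $F(z)$ is not itself a polynomial in $z$ but a formal power series in $t$ whose $t$-coefficients are polynomials in $z$, so Lagrange interpolation must be applied termwise in $t$. Once this is noted, the argument is essentially bookkeeping.
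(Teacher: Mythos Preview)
Your argument is correct and, in contrast to the paper, entirely elementary. The paper's proof is a one-line appeal to equivariant topology: it quotes the localisation formula
\[
h^{S^1}\bigl(\mathbb{C}\mathrm{P}^n[w_0,\ldots,w_n]\bigr)=S_H(w_0,\ldots,w_n;t)
\]
and then uses the general fact that the equivariant genus of any $\mathbf{S}^1$-manifold lies in $R[[t]]$ with constant term the ordinary genus; this buys brevity but imports the machinery of equivariant bordism and $K(BG)$. You instead give a self-contained combinatorial proof: after clearing denominators, $t^n S_H$ becomes the divided-difference expression $\sum_i F(w_i)/\prod_{j\ne i}(w_j-w_i)$ for $F(z)=\prod_j H((w_j-z)t)$, and Lagrange interpolation, applied termwise in $t$, reads off the coefficient of $z^n$ in the $t^k$-coefficient of $F(z)$, a polynomial in $z$ of degree $\le k$. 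This yields $0$ for $k<n$ and $(-1)^n\cdot(-1)^n[H^{n+1}]_n=h_n$ for $k=n$. Your approach makes the proposition independent of any topological input, which is attractive since the statement itself is purely about formal power series.

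One small wording issue: you write that the $t^k$-coefficient of $F(z)$ has degree \emph{exactly} $k$ in $z$, but in fact the leading coefficient $(-1)^k[H^{n+1}]_k$ can vanish (e.g.\ when $H\equiv 1$). Only ``degree $\le k$'' is needed for the vanishing when $k<n$, and for $k=n$ you compute the $z^n$-coefficient directly, so the argument is unaffected.
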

\begin{proof} It is known (see \cite{nov,misch,kasp,krich1}) that
 $$
 h^{S^1}({\Bbb C}\,{\mbox{P}}^n[w_0,\ldots,w_n])= S_H(w_0,\ldots,w_n;t).
 $$
 On the other hand,   $h^{S^1}({\Bbb C}\,{\mbox{P}}^n[w_0,\ldots,w_n])=
 h({\Bbb C}\,{\mbox{P}}^n)+s_1t+s_2t^2+\ldots \in R[[t]].$
\end{proof}

 \subsection{Algebraically rigid series}

 \begin{defn}We say that a formal power series $H$ is $n$-algebraically rigid and write $H\in \ar^n$ if for any distinct integers $w_0,\ldots,w_{m}$ and for all $m\in[1,n]$ the formal series $ S_H(w_0,\ldots,w_m;t)$
 is constant in $R[[t]]$, i.e. $$S_H(w_0,\ldots,w_m;t)\in R\subset R[[t]].$$
 If $H\in \ar^n$ for all $n$, then we say that $H$ is strong algebraically rigid and write $H\in \ar^\infty$.
 \end{defn}

 \begin{prop} Rigidity yields strong algebraic rigidity, i.e. for any rigid Hirzebruch genus $h:U_*\otimes R\to R$ its characteristic power series $H\in\ar^\infty$. 
 \end{prop}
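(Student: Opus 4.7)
The plan is to verify strong algebraic rigidity by feeding the rigidity hypothesis the explicit family of ${\bf S}^1$-manifolds constructed in Section~2.2, namely the projective spaces $\cp^n[w_0,\ldots,w_n]$ with the linear action $(2.1)$.

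First, I would fix $n \ge 1$ and distinct integers $w_0,\ldots,w_n$, and regard $\cp^n[w_0,\ldots,w_n]$ as a class in $U_*^{{\bf S}^1}\otimes R$. Since $h$ is rigid by assumption, its equivariant extension $h^{{\bf S}^1}$ sends this class into the constant subring $R \subset R[[t]]$, so
$$
h^{{\bf S}^1}\bigl([\cp^n[w_0,\ldots,w_n],{\bf S}^1]\bigr) \in R.
$$

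Next, I would invoke the identity already used inside the proof of Proposition~2.1,
$$
h^{{\bf S}^1}\bigl([\cp^n[w_0,\ldots,w_n],{\bf S}^1]\bigr) = S_H(w_0,\ldots,w_n;t),
$$
which combined with the previous display immediately forces $S_H(w_0,\ldots,w_n;t) \in R$. Because $n$ and the distinct integers $w_0,\ldots,w_n$ were arbitrary, and since every admissible $m \in [1,n]$ is covered by using $\cp^m[w_0,\ldots,w_m]$ in place of $\cp^n$, this yields $H \in \ar^m$ for every $m \ge 1$, i.e.\ $H \in \ar^\infty$ as required.

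The proof is essentially a direct reading-off of Proposition~2.1 under the rigidity hypothesis, so there is no serious technical obstacle. The only mildly subtle point worth pausing on is the verification that the weighted projective spaces with linear ${\bf S}^1$-actions realize every tuple of distinct integer weights that appears in Definition~2.1; but this is immediate from the construction in Section~2.2, since $(2.1)$ produces a legitimate stably complex ${\bf S}^1$-structure for any choice of distinct integer weights.
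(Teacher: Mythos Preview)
Your proof is correct and follows exactly the same approach as the paper: both arguments apply the rigidity hypothesis to the linear ${\bf S}^1$-actions on $\cp^n[w_0,\ldots,w_n]$ and then invoke the identity $h^{S^1}(\cp^n[w_0,\ldots,w_n])=S_H(w_0,\ldots,w_n;t)$ from Proposition~2.1 to conclude that $S_H$ is constant. The paper's version is simply terser, and additionally notes that the constant value equals $h_n$.
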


 \begin{proof}  Since
 $$
 h^{S^1}({\Bbb C}\,{\mbox{P}}^n[w_0,\ldots,w_n])= S(w_0,\ldots,w_n;t),
 $$
  the rigidity of $h$ implies  $ S_H(w_0,\ldots,w_n;t)=h_n\in R\subset R[[t]]$ for all $n$.
 \end{proof}



 \section{GT  rigid series}

  Let $a\in R$. Consider the Euler characteristic, i.e. the Euler genus $a^nc_n(M^n)$. It is easy to see that for the Euler genus the multiplicative sequence of polynomials is
  $$ K_0=1, \;  K_1=ac_1, \; K_2=a^2c_2, \; K_3=a^3c_3,\; \ldots .$$  Then $$H(t)=E_a(t):=1+at$$ and
  $$h_n=(n+1)\,a^n.$$

 Let
 $$
 H_0(t)=\frac{t}{1-e^{-t}} \, .
 $$
 Then $h$ coincides with the Todd genus and all $h_n=1$ \cite[Lemma 1.7.1]{Hir1}.

 Hirzebruch \cite[Lemma 1.8.1]{Hir1}  considers the $T_y-$genus with
 $$
 H_y(t)=\frac{t(e^{t(1+y)}+y)}{e^{t(1+y)}-1}.
 $$
 Note that $T_0$ is the Todd genus and
 $$
 T_y({\Bbb C}\,{\mbox{P}}^n)=\frac{1-(-y)^{n+1}}{1+y}\,.
 $$

 $T_y-$genus was extended by Krichever \cite{krich1}. He considered the $T_{x,y}-$genus with
 $$
 H_{x,y}(t)=\frac{t(xe^{t(x+y)}+y)}{e^{t(x+y)}-1}.
 $$
 Then
 $$T_{x,y}({\Bbb C}\,{\mbox{P}}^n)=\frac{x^{n+1}-(-y)^{n+1}}{x+y}\,. \eqno (3.1)$$

 Let $D_{a,b}(t):=H_{a+b,a-b}(t)$. Clearly,
 $$D_{a,b}(t)=t(a\coth{(at)}+b).$$
 Note that $D_{a,b}(t)$ is well defined for $a=0$ and $$D_{0,b}(t)=E_b(t).$$

 Here we introduce one more family of genera. Let
  $$G_{a,b}(t):=t(a\cot{(at)}+b).$$
Since $\cot(x)=i\coth(ix)$, we have
$$
G_{a,b}(t)=D_{ia,b}(t). \eqno (3.2)
$$
Therefore, for $R={\Bbb C}$, families of series $D$ and $G$ are the same.

Using $(3.1)$ and $(3.2)$ we get
$$
g_{a,b}({\Bbb C}\,{\mbox{P}}^n)=\frac{(b+ia)^{n+1}-(b-ia)^{n+1}}{2ia}\,,
$$
where $g_{a,b}$ is a genus with the characteristic series $G_{a,b}$.

  \begin{defn}
  We say that a formal power series $H\in R[[t]]$ is the GT (Generalized Todd)  series if there are $a,b\in R$ such that $H(t)=D_{a,b}(t)$ or  $H(t)=G_{a,b}(t)$. (For the case $R={\Bbb C}$ we may assume that $H(t)=D_{a,b}(t)$  only.)
  \end{defn}

  \begin{theorem} If $H\in R[[t]]$ is the $\dg$ series, then $h:U_*\otimes R\to R$ is rigid.
  \end{theorem}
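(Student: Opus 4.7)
The plan is to identify GT series as characteristic series of Krichever's $T_{x,y}$-genera (possibly after complexification) and then invoke the rigidity theorem of Buchstaber--Panov--Ray~\cite{BPR}. \emph{Step one} handles $H=D_{a,b}$. Writing $\coth(at)=(e^{2at}+1)/(e^{2at}-1)$ and collecting terms gives the algebraic identity
\[
D_{a,b}(t)=t\bigl(a\coth(at)+b\bigr)=\frac{t\bigl((a+b)e^{2at}+(a-b)\bigr)}{e^{2at}-1}=H_{a+b,\,a-b}(t),
\]
so the genus with characteristic series $D_{a,b}$ is literally the $T_{x,y}$-genus with $x=a+b$, $y=a-b$. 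Its rigidity on arbitrary stably complex manifolds is the main result of \cite{BPR}, which closes this case.

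\emph{Step two} handles $H=G_{a,b}$ and reduces it to step one via the identity $(3.2)$, namely $G_{a,b}(t)=D_{ia,b}(t)$. When $R=\mathbb{C}$ the parameters $(ia,b)$ already lie in $R$ and step one applies verbatim. When $R=\mathbb{Q}$ or $R=\mathbb{R}$ we complexify: the scalar extension $h_{\mathbb{C}}:=h\otimes_R\mathbb{C}\colon U_*\otimes\mathbb{C}\to\mathbb{C}$ has characteristic series $D_{ia,b}\in\mathbb{C}[[t]]$ and is therefore rigid by step one. The construction $h\mapsto h^{S^1}$ is natural under base change -- this is visible already from formula~$(1.1)$, which is a universal polynomial expression in the coefficients of $H$ and the weights $w_{ij}$ -- so rigidity of $h_{\mathbb{C}}$ forces $h^{S^1}([X,S^1])$ to take values in $R\subset R[[t]]$, proving that $h$ itself is rigid.

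\emph{Main obstacle.} There is no genuine obstacle here: the theorem is essentially a repackaging of the Buchstaber--Panov--Ray theorem combined with the two elementary identities $D_{a,b}=H_{a+b,a-b}$ and $G_{a,b}=D_{ia,b}$ and a standard naturality check under scalar extension. If one wanted a proof independent of \cite{BPR}, one could instead verify the algebraic rigidity condition $S_H(w_0,\ldots,w_m;t)\in R$ directly for $H\in\{D_{a,b},G_{a,b}\}$ by a partial fractions calculation of the rational functions of $e^{at}$ (or $e^{iat}$) appearing in the sum, showing that all non-constant coefficients in $t$ cancel; this is computationally heavier but presents no conceptual difficulty given the simple trigonometric/hyperbolic form of the series involved.
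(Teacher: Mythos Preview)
Your proposal is correct and follows essentially the same route as the paper: identify $D_{a,b}$ with $H_{a+b,a-b}$ (which in the paper is actually the definition of $D_{a,b}$), invoke the known rigidity of $T_{x,y}$ for stably complex manifolds from \cite{BPR} (the paper also cites \cite{AH,krich1} with the sign corrections of \cite{BPR}), and then handle $G_{a,b}$ via the identity $(3.2)$ together with a complexification/base-change argument. The paper additionally mentions an alternative proof by checking rigidity on explicit multiplicative generators of $U_*^{S^1}$ from \cite{mus}, whereas your suggested alternative is a direct partial-fractions verification of $S_H\in R$; but these are side remarks and the main line of argument is the same.
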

  \begin{proof} Proofs of rigidity of $T_y$ and  $T_{x,y}$ genera in \cite{AH, krich1} are  for the case $R={\Bbb Q}$. 
  We can repeat these proofs (with ``signs'' from \cite{BPR})  practically without changes for the case $R={\Bbb C}$. Therefore, the theorem holds for
  $T_{x,y}, \; x,y\in{\Bbb C},$ genera, i.e. for $D_{a,b}(t), \; a,b\in{\Bbb C},$ series. From this it follows that the theorem holds also for $H=D_{a,b}, \; a,b\in R$, where $R={\Bbb Q}$ or ${\Bbb R}$. Moreover,
  $(3.2)$ implies a proof for the case $H=G_{a,b}$.

  We also can  propose  a proof which is based  on multiplicative generators of  $U_*^{S^1}$  \cite{mus}, where $U_*^{S^1}$ is the bordisms ring of complex manifolds with circle actions.  The idea of this proof is very natural: we just verify the theorem for generators that are given in \cite{mus} explicitly. It is clear that rigidity of  $h$  for generators implies rigidity for all manifolds. Note that \cite{mus} gives a proof of the Atiyah - Hirzebruch theorem for the $T_y-$genus.
  \end{proof}

 \section{A converse theorem on rigid genera}

 In this section we prove the main theorem which implies that a Hirzebruch genus is rigid if and only if its characteristic is the GT  series. In our paper \cite{mus2} we consider the case of a rigid genus $h$ such that  $F_H(\log{z})$ is a rational function in $z$. It is proved that in this case we have $H=H_{x,y}$. Surprisingly, the main line  of a proof in \cite{mus2} can be used also for the general case.

 \begin{lemma} If $H=1+r_1t+r_2t^2+\ldots\in\ar^2$, then
 $$F_H^2(-t)+h_1F_H(t)+F_H'(t)=h_2.$$
 \end{lemma}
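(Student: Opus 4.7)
The plan is to derive the lemma from the single identity
$$
S_H(w_0,w_1,w_2;t) = h_2
$$
guaranteed by $H\in\ar^2$, by specializing to a one-parameter family of weights and extracting a degenerate limit via formal series expansion in a new variable.

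First I rewrite the three-point rigidity identity in terms of the differences $a=w_1-w_0$ and $b=w_2-w_0$, obtaining
$$
F_H(at)F_H(bt)+F_H(-at)F_H((b-a)t)+F_H(-bt)F_H((a-b)t)=h_2.
$$
Each coefficient of $t^k$ on the left-hand side is a rational function of $(a,b)$ whose only poles lie on the hyperplanes $a=0$, $b=0$, $a=b$ (one checks that the $t^{-2}$ and $t^{-1}$ contributions cancel, as they must by Proposition 2.1). Since this rational function vanishes on the Zariski-dense set of admissible integer pairs supplied by $\ar^2$, it vanishes identically, so the displayed equation is in fact a formal identity in two independent parameters $a,b$.

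Next I substitute $a=1$ and $b=1+s$, where $s$ is a new formal variable, to get
$$
F_H((1+s)t)\,F_H(t) + F_H(-(1+s)t)\,F_H(-st) + F_H(-t)\,F_H(st)=h_2.
$$
The factors $F_H(\pm(1+s)t)$ admit Taylor expansions $F_H(\pm t)\pm stF_H'(\pm t)+O(s^2)$ in $s$, while $F_H(\pm st)$ are Laurent in $s$ with simple poles $\pm 1/(st)$. The $s^{-1}$ contributions from the second and third summands cancel, and collecting the coefficient of $s^0$ produces
$$
F_H(t)^2 + 2r_1 F_H(-t) + F_H'(-t) = h_2.
$$
Using the identity $h_1=[H(t)^2]_1=2r_1$ from the very definition of $h_1$, and then substituting $t\mapsto -t$, converts this into
$$
F_H(-t)^2 + h_1 F_H(t) + F_H'(t) = h_2,
$$
which is the claim.

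The main conceptual step is the passage from integer weights to formal parameters $a,b$; once that density/rationality argument is granted, the remainder is routine Laurent series bookkeeping, with the cancellation of the $s^{-1}$ terms serving as an internal consistency check. One could alternatively extract the lemma by solving a small linear system obtained from several carefully chosen integer triples, but the Taylor-in-$s$ approach is what makes the appearance of the derivative $F_H'$ most transparent.
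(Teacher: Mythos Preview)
Your proof is correct and follows essentially the same strategy as the paper: both degenerate the three--weight identity supplied by $\ar^2$ to a configuration where two weights collide and then read off the constant term of the resulting Laurent expansion in the degeneration parameter. The paper specializes to weights $(a,b,-b)$ and lets $\varepsilon=bs\to 0$ (obtaining the stated formula directly, without your final substitution $t\mapsto -t$), whereas you take $(0,1,1+s)$ with $s\to 0$; you are also more explicit than the paper about the Zariski--density step that justifies passing from integer weights to formal parameters, but the underlying mechanism is identical.
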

 \begin{proof} If $H\in\ar^2$, then for any $a,b,c\in R$ we have
 $$
 f((b-a)s)f((c-a)s)+ f((a-b)s)f((c-b)s)+ f((a-c)s)f((b-c)s)=h_2,
 $$
where $f(t)=F_H(t)$.
Let $t=as,\; \varepsilon=bs,$ and $c=-b$. Then
 $$
 f(\varepsilon-t)f(-\varepsilon-t)+ f(t+\varepsilon)f(2\varepsilon)+ f(t-\varepsilon)f(-2\varepsilon)=h_2.
 $$

 Let us denote by $\bigo(\varepsilon)$ any series $\varphi(t,\varepsilon)=\varepsilon\,\psi(t,\varepsilon)$ with  $\psi(t,\varepsilon)\in R[[t,1/t,\varepsilon]]$.  Clearly,
 $$
 \frac{1}{t+\varepsilon}=\frac{1}{t}\,\frac{1}{1+\varepsilon/t}=\frac{1}{t}-\frac{\varepsilon}{t^2}+\ldots= \frac{1}{t}+\bigo(\varepsilon),
 $$
 and $1/(t-\varepsilon)=1/t+\bigo(\varepsilon)$.
 Therefore, we have
 $$
 f(t+\varepsilon)f(2\varepsilon)+ f(t-\varepsilon)f(-2\varepsilon)
 $$
 $$
 = f(t+\varepsilon)\left(\frac{1}{2\varepsilon}+r_1\right)+ f(t-\varepsilon)\left(\frac{-1}{2\varepsilon}+r_1\right)+\bigo(\varepsilon)
 $$
 $$
 = 2r_1f(t)+\frac{f(t+\varepsilon)-f(t-\varepsilon)}{2\varepsilon} +\bigo(\varepsilon)=2r_1f(t)+f'(t)+\bigo(\varepsilon).
 $$
(Note that $f'(t)$ is well defined. Namely, for   $g(t)=\sum_k {g_kt^{k}}$ we have $g'(t)=\sum_k {kg_kt^{k-1}}$.)
 Since $h_1=2r_1$, we obtain $$f^2(-t)+h_1f(t)+f'(t)+\bigo(\varepsilon)=h_2,$$ and
$$F_H^2(-t)+h_1F_H(t)+F_H'(t)=h_2.$$
 \end{proof}

\begin{remark} In fact, we have the following equality:
$$
 h^{S^1}({\Bbb C}\,{{\rm P}}^2[1,0,0]) = F_H^2(-t)+h_1F_H(t)+F_H'(t)
$$
(see \cite{krich1,mus2}). So Lemma 4.1 shows that rigidity of $h$ for ${\Bbb C}\,{{\rm P}}^2[w_0,w_1,w_2]$ with distinct $w_i$ yields that $h$ is rigid for ${\Bbb C}\,{{\rm P}}^2[w_0,w_1,w_1]$ also.
It is clear that the same holds for all $n$, i.e. the rigidity of $h$ for ${\Bbb C}\,{{\rm P}}^n[w_0,\ldots,w_n]$ with distinct $w_i$ yields the rigidity of  $h$  for ${\Bbb C}\,{{\rm P}}^n[w_0,\ldots,w_n]$ with any $w_i$.
\end{remark}

\begin{theorem} $H\in\ar^2$ if and only if $H$ is the $\dg$  series.
\end{theorem}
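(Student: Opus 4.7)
The $(\Leftarrow)$ direction is immediate from Theorem 3.1 and Proposition 2.2: a GT series gives a rigid genus, and rigid genera have characteristic in $\ar^\infty \subseteq \ar^2$. For $(\Rightarrow)$, the plan is to split $F_H$ into its even and odd parts and exploit the two strata of $\ar^2$ (the conditions $m=1$ and $m=2$ in Definition 2.1) to pin each part down separately. Write
$$
F_H(t) = u(t) + v(t), \quad u(t) = r_1 + r_3 t^2 + \cdots, \quad v(t) = 1/t + r_2 t + r_4 t^3 + \cdots .
$$
The first step uses the $m = 1$ case. A direct computation gives $S_H(w_0, w_1; t) = F_H(\lambda t) + F_H(-\lambda t) = 2u(\lambda t)$ with $\lambda = w_1 - w_0 \neq 0$, and the hypothesis that this series is constant in $t$ forces $r_3 = r_5 = \cdots = 0$. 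Hence $u$ is the constant $b := r_1 = h_1/2$.

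The second step is to substitute $F_H = b + v$ (so that $F_H(-t) = b - v$) into Lemma 4.1. Using $h_1 = 2b$, the terms linear in $v$ cancel and the identity reduces to the scalar Riccati relation
$$
v(t)^2 + v'(t) = C, \qquad C := h_2 - 3b^2 .
$$
Matching coefficients of $t^{2k}$ yields $v_1 = C/3$ and a triangular recursion $(2k+3)\, v_{2k+1} = -P_k(v_1, \ldots, v_{2k-1})$ for $k \geq 1$, so $v$ is uniquely determined by $C$ together with the normalization $v \sim 1/t$. Since $a\coth(at)$ has the same Laurent expansion $1/t + a^2 t/3 + \cdots$ and satisfies $v^2 + v' = a^2$ identically (from $(\coth x)' = 1 - \coth^2 x$), we get $v(t) = a\coth(at)$ whenever $a^2 = C$; over ${\Bbb R}$ with $C < 0$, the identity $i\coth(ix) = \cot x$ produces $v = a\cot(at)$ with $-a^2 = C$ instead. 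In all cases $H(t) = t\, F_H(t)$ equals $t(a\coth(at) + b) = D_{a,b}(t)$ or $t(a\cot(at) + b) = G_{a,b}(t)$, which is GT.

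The subtle point worth emphasizing is that Lemma 4.1 alone does \emph{not} force $H$ to be GT: its general solution space is a three-parameter family, carried by $h_1$, $C$, and an extra free leading coefficient in the non-constant part of $u$, whereas the GT family is only two-parameter. The step that carries the weight is the observation that the seemingly weaker $m = 1$ stratum of $\ar^2$ is precisely what kills the extra parameter by collapsing $u$ to a constant. Once $u$ is known to be constant, the remaining analysis is a straightforward formal-series uniqueness argument matched against the known expansion of $a\coth(at)$.
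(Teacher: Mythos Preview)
Your argument is correct and follows the same route as the paper: use the $m=1$ condition to get $f(-t)=h_1-f(t)$ (equivalently, your collapse of the even part $u$ to the constant $r_1$), substitute into Lemma~4.1 to obtain a scalar Riccati equation, and identify the solution as a GT series. The only difference is in the last step: the paper integrates $f'=-f^2+h_1f+h_2-h_1^2$ explicitly by partial fractions, splitting on the sign of $d=h_2-3r_1^2$, whereas you argue formal-series uniqueness for $v^2+v'=C$ against the known solution $a\coth(at)$; since $C=d$, the two are equivalent, and your version has the mild advantage of handling all cases uniformly.
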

\begin{proof}
Theorem 3.1 implies that if $H$ is the GT  series, then $H\in\ar^\infty\subset\ar^2.$ (It is easy to prove this fact directly using elementary algebraic computations.)

Consider $H\in\ar^2$. Then $H\in\ar^1$ and
  $$S_H(1,0;t)=f(-t)+f(t)=h_1, \; \mbox{ where } \;  f(t):=F_H(t).$$
This yields
$$
f(-t)=h_1-f(t). \eqno (4.1)
$$

Lemma 4.1 implies
$$
(f(-t))^2+h_1f(t)+f'(t)=h_2.
$$
Using $(4.1)$, we get
$$
(h_1-f(t))^2+h_1f(t)+f'(t)=h_2.
$$
Then
$$
f'=-f^2+h_1f+h_2-h_1^2.
$$
From this follows that
$$
t=\int {\frac{df}{-f^2+h_1f+h_2-h_1^2}}\,. \eqno (4.2)
$$

Let $$d=h_2-3h_1^2/4=h_2-3r_1^2.$$
Consider two cases: (a) $d\ne0$ and (b) $d=0$

\medskip

\noindent (a) $d\ne0. \; $ Let $q_1=r_1-\sqrt{d}, \; q_2=r_1+\sqrt{d}$. Then
$$
\frac{q_2-q_1}{-f^2+h_1f+h_2-h_1^2}=\frac{1}{f-q_1}-\frac{1}{f-q_2}\,. 
$$
It follows from $(4.2)$ that
$$
(q_2-q_1)\,t=\log{\left(\frac{f-q_1}{f-q_2}\right)}+c.
$$
Since $f(t)-1/t\in R[[t]]$ we have $c=0$.    Thus,
$$
H(t)=tf(t)=\frac{t(q_2e^{(q_2-q_1)t}-q_1)}{e^{(q_2-q_1)t}-1}=H_{q_2,-q_1}(t)=D_{\sqrt{d},r_1}(t). \eqno (4.3)
$$

For the case $R={\Bbb C}$, and for the cases $R={\Bbb Q}$ or ${\Bbb R}$ and $d>0$ the equality (4.3) shows that $H$ is the GT  series.

In the cases $R={\Bbb Q}$ or ${\Bbb R}$ and $d<0$ we have
$$
H(t)=H_{q_2,-q_1}(t)=D_{ia,r_1}(t)=G_{a,r_1}(t), \; \mbox{ where } \; a=\sqrt{-d}.
$$

\medskip

\noindent (b) $d=0. \; $ We have
$$
\frac{1}{-f^2+h_1f+h_2-h_1^2}=\frac{-1}{(f-r_1)^2}\,.
$$
It follows from $(4.2)$  that
$$
H(t)=tf(t)=1+r_1t=E_{r_1}(t).
$$
\end{proof}

\begin{cor} Let $R={\Bbb Q}$, ${\Bbb R}$, or ${\Bbb C}$. A Hirzebruch genus $h:U_*\otimes R\to R$ is rigid if and only if $H$ is the $\dg$ series.
\end{cor}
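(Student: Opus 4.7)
The plan is to obtain the corollary as a direct synthesis of the three substantive results already established: Theorem 3.1 (GT $\Rightarrow$ rigid), Proposition 2.2 (rigid $\Rightarrow$ strongly algebraically rigid), and Theorem 4.1 ($\ar^2 \Leftrightarrow$ GT). There is no new analytic or algebraic content to supply; the work has been done and I only need to assemble the implications in the right order.

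For the $(\Leftarrow)$ direction, I would just invoke Theorem 3.1: if the characteristic series $H$ equals $D_{a,b}$ or $G_{a,b}$ for some $a,b \in R$, then the associated genus $h:U_*\otimes R\to R$ is rigid. Nothing further is needed here.

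For the $(\Rightarrow)$ direction, suppose $h$ is rigid. By Proposition 2.2, rigidity forces $H\in\ar^\infty$, meaning $S_H(w_0,\dots,w_m;t)$ is constant in $R[[t]]$ for every $m\ge 1$ and every choice of distinct integers $w_0,\dots,w_m$. In particular, specializing to $m=2$ gives $H\in\ar^2$. Theorem 4.1 then identifies $H$ as a GT series, i.e.\ $H=D_{a,b}$ or $H=G_{a,b}$ for some $a,b\in R$ (with the convention, as in Definition 3.1, that over $R={\Bbb C}$ only the $D$-family is needed thanks to the relation $G_{a,b}=D_{ia,b}$).

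The only step that could conceivably require care is making sure the three $R$-settings ($\Bbb Q$, $\Bbb R$, $\Bbb C$) are handled uniformly, but Theorem 3.1 was already stated for all three, and the case analysis inside the proof of Theorem 4.1 (the sign of $d = h_2 - 3r_1^2$) produces either a $D$-series or a $G$-series depending on $R$ without further complication. So there is no genuine obstacle; the corollary is simply the composite
\[
\text{$h$ rigid} \;\Longrightarrow\; H\in\ar^\infty \;\Longrightarrow\; H\in\ar^2 \;\Longrightarrow\; H \text{ is GT} \;\Longrightarrow\; h \text{ rigid},
\]
closing the loop and proving both implications at once.
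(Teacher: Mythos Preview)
Your proposal is correct and is exactly the intended argument: the corollary is obtained by combining Theorem~3.1, Proposition~2.2, and Theorem~4.1 via the chain $H$ GT $\Rightarrow$ $h$ rigid $\Rightarrow$ $H\in\ar^\infty\subset\ar^2$ $\Rightarrow$ $H$ GT. The paper leaves the corollary unproved precisely because this synthesis is immediate from the preceding results.
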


\begin{cor} If $H$ is  2-algebraically rigid, then $H$ is strong algebraically rigid. In other words,
$\ar^2=\ar^3=\ldots=\ar^\infty$.
\end{cor}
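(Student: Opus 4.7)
The plan is to chain together three results already established in the paper. The key observation is that the class $\ar^2$ sits between two notions that have been characterized explicitly: the GT series on one side and rigidity of the associated Hirzebruch genus on the other.

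First I would dispatch the trivial direction: by the definition of $\ar^n$, one has $\ar^\infty \subseteq \ar^{n+1} \subseteq \ar^n$ for every $n\geq 2$, so in particular $\ar^\infty \subseteq \ar^2$. For the nontrivial inclusion $\ar^2 \subseteq \ar^\infty$, I would invoke Theorem 4.1 to conclude that every $H \in \ar^2$ must be a GT series, i.e.\ of the form $D_{a,b}$ or $G_{a,b}$ (with the degenerate case $E_a$ included when $d=0$). Then Theorem 3.1 tells us that the Hirzebruch genus $h:U_*\otimes R\to R$ determined by such an $H$ is rigid, and finally Proposition 2.2 (rigidity yields strong algebraic rigidity) gives $H \in \ar^\infty$.

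Combining the two inclusions yields $\ar^2 = \ar^\infty$, and since $\ar^\infty \subseteq \ar^n \subseteq \ar^2$ holds for every $n \geq 2$, all of the intermediate classes $\ar^2, \ar^3, \ldots, \ar^\infty$ must coincide as well.

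There is really no obstacle at this stage: the substantive work has already been done in Theorem 4.1, whose proof reduced to the separable first-order ODE $f' = -f^2 + h_1 f + h_2 - h_1^2$ by combining the relation $f(-t)+f(t)=h_1$ coming from $\ar^1$ with the quadratic identity of Lemma 4.1, and then integrated it explicitly to produce the GT normal form. So the proof of the corollary is a short citation chain, Theorem 4.1 followed by Theorem 3.1 followed by Proposition 2.2, bracketed by the trivial inclusion $\ar^\infty \subseteq \ar^2$.
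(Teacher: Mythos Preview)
Your argument is correct and matches the paper's implicit reasoning: the corollary is stated without proof there, but it follows immediately from the chain Theorem~4.1 $\Rightarrow$ GT series $\Rightarrow$ (Theorem~3.1) rigid genus $\Rightarrow$ (Proposition~2.2) $\ar^\infty$, together with the trivial inclusions $\ar^\infty\subseteq\ar^n\subseteq\ar^2$. Indeed, the paper already notes inside the proof of Theorem~4.1 that ``if $H$ is the GT series, then $H\in\ar^\infty\subset\ar^2$,'' which is exactly your citation chain compressed into one sentence.
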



Consider the case of oriented manifolds. In this case the characteristic series $H$ has to be even, i.e. $H(-t)=H(t)$ \cite{Hir1}. If we apply Theorem 4.1 for even series, then we get  the following theorem.

\begin{theorem} Let $R={\Bbb Q}$, ${\Bbb R}$, or ${\Bbb C}$. A Hirzebruch genus
$h:\Omega^{\so}_*\otimes R\to R$ is rigid if and only if $H(t)=at\coth{(at)}$ or $H(t)=at\cot{(at)}$, where $a\in R$.
\end{theorem}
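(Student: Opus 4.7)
The plan is to reduce Theorem 4.2 to Corollary 4.1 combined with the parity constraint that the characteristic series of any oriented Hirzebruch genus must be even.

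For the forward direction, suppose $h:\Omega^{\so}_*\otimes R\to R$ is rigid. The natural forgetful homomorphism $U_*\otimes R\to\Omega^{\so}_*\otimes R$ sending a stably complex bordism class to its underlying oriented class is $S^1$-equivariant, since every $S^1$-action on a stably complex manifold is automatically orientation preserving. Composing $h$ with this forgetful map yields a complex Hirzebruch genus $\tilde h:U_*\otimes R\to R$ with the same characteristic series $H$, and because $\tilde h^{S^1}([X,S^1])$ factors through $h^{S^1}([X,S^1])$, rigidity of $h$ passes to $\tilde h$. Corollary 4.1 then forces $H$ to be a GT series, i.e.~$H(t)=D_{a,b}(t)=at\coth(at)+bt$ or $H(t)=G_{a,b}(t)=at\cot(at)+bt$ for some $a,b\in R$. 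Imposing the evenness condition $H(-t)=H(t)$ and observing that $at\coth(at)$ and $at\cot(at)$ are even while $bt$ is odd, we get $b=0$, leaving exactly the two series in the theorem.

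For the converse, both candidate series are even GT series, so by Theorem 3.1 the associated complex genera are rigid and, in particular, the formal power series $S_H(w_0,\ldots,w_n;t)$ of Section 2 are constants in $R$, i.e.~$H\in\ar^\infty$. To lift rigidity from $U_*\otimes R$ to $\Omega^{\so}_*\otimes R$ I would apply equivariant localization on the oriented side: for an oriented $S^1$-manifold with isolated fixed points, each normal fibre splits as a sum of oriented real 2-planes carrying integer weights, so the equivariant genus takes the same structural form as formula~(1.1), now read with signed weights. The algebraic constancy encoded by $H\in\ar^\infty$ then yields oriented rigidity. This recovers the classical Atiyah--Hirzebruch rigidity of the $L$-genus (the $\coth$ case with $a=1$) and handles the $\cot$ case through the substitution $a\mapsto ia$ of~(3.2).

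The main obstacle lies in the converse: the forward implication is essentially immediate from Corollary 4.1 together with parity, but establishing rigidity on $\Omega^{\so}_*\otimes R$ requires handling oriented $S^1$-manifolds that may not admit an $S^1$-invariant stably complex structure. The cleanest route is via the oriented Atiyah--Bott localization theorem together with the algebraic rigidity $\ar^\infty$ already proved in Theorem 4.1; alternatively one could imitate the generator-based approach sketched for $U_*^{S^1}$ at the end of the proof of Theorem~3.1, now applied to explicit multiplicative generators of $\Omega^{\so}_*$.
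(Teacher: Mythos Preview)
Your forward direction is exactly the paper's argument: the paper's entire proof of this theorem is the single sentence ``If we apply Theorem 4.1 for even series, then we get the following theorem,'' and your reduction (pass to the induced complex genus, invoke Corollary~4.1, then impose $H(-t)=H(t)$ to kill the odd summand $bt$ in $D_{a,b}$ or $G_{a,b}$) is precisely that.

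Where you diverge from the paper is in worrying about the converse. The paper gives no separate argument that $at\coth(at)$ and $at\cot(at)$ define rigid genera on $\Omega^{\so}_*\otimes R$; it simply regards this as part of ``applying Theorem~4.1,'' implicitly leaning on the Atiyah--Hirzebruch rigidity of the $L$-genus cited in the introduction (since $D_{a,0}=H_{a,a}$ is a rescaling of the signature) together with the substitution~(3.2) for the $\cot$ case. Your concern that an oriented $S^1$-manifold need not carry an invariant stably complex structure is legitimate, and your sketch via oriented equivariant localization is the standard remedy: for even $H$ the function $F_H$ is odd, so the sign ambiguity in the weights at an isolated fixed point is absorbed by the orientation sign, and the fixed-point sum is well defined and governed by the same algebraic identities that give $H\in\ar^\infty$. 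This is not a different route from the paper so much as a fleshing-out of a step the paper leaves tacit.
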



\medskip

\medskip

\medskip

\noindent {\bf\Large Acknowledgments}

\medskip

\medskip


The author thanks V. M. Buchstaber, I. M. Krichever, S. P. Novikov, and T. E. Panov  for useful discussions and comments.

\medskip

\medskip

\medskip

\medskip

\medskip

\medskip

 O. R. Musin, Department of Mathematics, University of Texas at Brownsville, 80 Fort Brown, Brownsville, TX, 78520.

 {\it E-mail address:} oleg.musin@utb.edu

\end{document}